\documentclass[dvips, 12pt, a4paper]{paper}
\usepackage{amsmath,amssymb,amsthm, dsfont}
\usepackage{color}
\definecolor{violet}{rgb}{0.5,0,0.8}

\setlength{\textwidth}{16cm} \setlength{\textheight}{24cm}
\setlength{\oddsidemargin}{1cm} \setlength{\evensidemargin}{1cm}
\setlength{\hoffset}{-1cm } \setlength{\voffset}{-1cm}
\setcounter{MaxMatrixCols}{20}

\setlength{\parindent}{0cm}

\theoremstyle{plain}
\newtheorem{thm}{Theorem}[section]

\theoremstyle{plain}
\newtheorem{cor}[thm]{Corollary}

\theoremstyle{plain}
\newtheorem{lemma}[thm]{Lemma}

\theoremstyle{plain}

\theoremstyle{plain}
\newtheorem{proposition}[thm]{Proposition}

\theoremstyle{plain}
\newtheorem{definition}[thm]{Definition}

\theoremstyle{plain}

\theoremstyle{plain}
\newtheorem{remark}[thm]{Remark}

\pagestyle{myheadings}

\bibliographystyle{acm}
\begin{document}
\title{Thin-thick approach to martingale representations on progressively enlarged filtrations}
\author{Antonella Calzolari \thanks{Dipartimento di Matematica - Universit\`a di Roma
``Tor Vergata'', via della Ricerca Scientifica 1, I 00133 Roma,
Italy }  \and Barbara Torti $^*$}
\maketitle
\begin{abstract}
We study the predictable representation property in the progressive enlargement $\mathbb{F}^\tau$ of a reference filtration $\mathbb{F}$ by a random time $\tau$.~Our approach is based on the decomposition of any random time into two parts, one overlapping $\mathbb{F}$-stopping times (thin part) and the other one that avoids $\mathbb{F}$-stopping times (thick part).~We assume that the $\mathbb{F}$-thin part of $\tau$ is nontrivial and prove a martingale representation theorem on $\mathbb{F}^\tau$.~We thus extend previous  results dealing with $\mathbb{F}$-avoiding random times.~We collect some examples of application to the enlargement of the natural filtration of a L\'evy process.
\end{abstract}
\keywords{Predictable representations property;
enlargement of filtration;
L\'evy processes;
compensators.}
\section{Introduction}\label{sec1}
Let $\mathbb{F}$ be a filtration and  $(M^j)_{j\in J}$, $J\subset\mathbb{N}$, a set of square-integrable pairwise orthogonal $\mathbb{F}$-martingales.~Assume that any
square-integrable $\mathbb{F}$-martingale can be represented as sum of stochastic integrals of predictable processes
w.r.t.~the elements of $(M^j)_{j\in J}$.~In other words $(M^j)_{j\in J}$ is an \textit{$\mathbb{F}$-basis} in the sense of Davis and Varaiya (see \cite{davis1}).~It follows that  the \textit{(strong) predictable representation property in $\mathbb{F}$} holds (see \cite{jacod}).\bigskip\\
Let now $\tau$ be a random time and let $\mathbb{F}^\tau$ be equal, up to standardization, to the filtration  $\mathbb{F}\vee\sigma(\tau\wedge\cdot)$.~$\mathbb{F}^\tau$ is known as the \textit{progressive enlargement of $\mathbb{F}$ by
$\tau$}.~Thus, $\tau$ is an $\mathbb{F}^\tau$ stopping time and it makes sense to consider the \textit{$\mathbb{F}^\tau$-compensated occurrence process of $\tau$}.~It deals with the $\mathbb{F}^\tau$-compensation in Doob's sense of the sub-martingale $\mathbb{I}_{\tau\leq }$.~In the following, we will denote it by $H^{\tau,\mathbb{F}^\tau}$.\bigskip\\
In this paper, we focus on the predictable representation property in $\mathbb{F}^\tau$.~More precisely, we study under which conditions the family of martingales $\left((M^j)_{j\in J},
H^{\tau,\mathbb{F}^\tau}\right)$ allows one to represent all $\mathbb{F}^\tau$-local martingales.\bigskip\\
Our hypotheses are less restrictive than those typically adopted in the papers dealing with this problem.~Most authors, in fact, assume that the
graph of $\tau$ is disjoint from the graph of any $\mathbb{F}$-stopping time or equivalently that the \textit{$\mathbb{F}$-avoiding condition} holds (see e.g.~\cite{kusuoka99}, \cite{bla-jean04},  \cite{elka-jean-jiao09}, \cite{jean-coku-nike12}, \cite{ca-jean-za13}, \cite{ditella-eng-22}).~Here instead, according to the \textit{thin-thick decomposition} of any random time (see \cite{ak-chou-jean-18}), $\tau$ may coincide with $\mathbb{F}$-stopping times.\bigskip\\
Therefore, we split the problem of the identification of an $\mathbb{F}^\tau$-basis into two steps:\begin{itemize}  \item [(i)] the construction of a basis on an ``intermediate filtration'' obtained progressively enlarging $\mathbb{F}$ by the \textit{thin part of $\tau$}; \item [(ii)] the construction of a basis on the progressive enlargement of the intermediate filtration by the \textit{thick part of $\tau$}.\end{itemize}
We stress that solving the first issue is the real goal of our work.\bigskip\\
Our main theorem reads as follows: $\left((M^j)_{j\in J},
H^{\tau,\mathbb{F}^\tau}\right)$ is an $\mathbb{F}^\tau$-basis as soon as the thin part of $\tau$ is composed by $\mathbb{F}$-predictable stopping times on which $\mathbb{F}$ is continuous, and any $\mathbb{F}$-martingale is also an  $\mathbb{F}^\tau$-martingale (see Theorem \ref{thm:new}).\bigskip\\
The theorem applies when $\mathbb{F}$ is the natural filtration of any L\'evy process since, in this case, $\mathbb{F}$ is a quasi-left continuous filtration (see Theorem \ref{cor:Levy}) and, in particular, when $\mathbb{F}$ is the natural filtration of a Brownian motion (see Corollary \ref{cor:Brow}).\bigskip\\
Results about the propagation of the martingale representation property to progressively enlarged filtrations are useful in many applied fields.~In credit risk theory
the random time $\tau$ plays the role of default time.~Financial models in which the default time may coincide with predictable stopping times have been considered first in \cite{MR2070167} and then  in \cite{MR3758923},  \cite{MR3758922} and \cite{fontana-smith17}.~Martingale representations theorems on extended filtrations may be crucial also when solving optimal stochastic control problems or filtering problems (see e.g.~\cite{ba-con-ditella-21} and \cite{band-calvia-col22}).\bigskip\\
%
%
%
This paper is organized as follows.~In Section 2 we fix the notations and recall some basic definitions.~In Section 3 we introduce the hypothesis on the thin part of $\tau$ and study, under this hypothesis, its compensated occurrence process (see Proposition \ref{lemma:eq:Delta}).~In Section 4, after showing the pairwise orthogonality of the set $\left((M^j)_{j\in J},
H^{\tau,\mathbb{F}^\tau}\right)$ (see Lemma \ref{cor}), we prove our main theorem (see Theorem \ref{thm:new}).~In Section 5, when  $\mathbb{F}$ is the natural filtration of a L\'evy process, we discuss some examples of application of the theorem.~Finally, we devote  Section 6 to brief comments and research proposals.
\section{Notations and definitions}\label{sec2}
Let  $(\Omega, \mathcal{F}, P)$ be a complete probability space and  $\mathbb{F}=(\mathcal{F}_t)_t$ a standard filtration on it.~$\mathcal{M}^2(P,\mathbb{F})$ denotes the Hilbert space of square-integrable $(P,\mathbb{F})$-martingales with inner product  $(M^j,M^l)\rightarrow E^P[M^j,M^l]_\infty$.\bigskip\\
Let us introduce the notion of the \textit{basis} of a filtration in the sense of Davis and Varaiya (see \cite{davis1}).\vspace{.1em}\\
 \begin{definition}\label{def:basis}
An $\mathbb{F}$-basis is a subset $(M^j)_{j\in J}, J\subset \mathbb{N},$ of $\mathcal{M}^2(P,\mathbb{F})$ whose elements are pairwise strongly orthogonal martingales
 such that each $V\in \mathcal{M}^2(P,\mathbb{F})$ satisfies
\begin{align*}
V_t=V_0+\sum_{j\in J}\int_0^t \Phi^V_j(s) d M^j_s,\;\;t\geq 0,
\end{align*}
where $V_0$ is a random variable $\mathcal{F}_0$-measurable and, for all $j\in J$,
$\Phi^V_j$ is a
predictable process that verifies $E^P\left[\int_0^\infty\,(\Phi^V_j)^2(s) d[M^j]_s\right]<+\infty$.
\end{definition}
\vspace{.1em}
\noindent We recall some basic facts about filtrations and random times.\bigskip\\
We call \textit{nontrivial} an $\overline{\mathbb{R}}^+$-valued random time $\tau$, such that $P(\tau< +\infty)>0$.\bigskip\\
Given a nontrivial random time $\tau$ the filtration $\sigma(\tau\wedge \cdot)$ is the minimal standard filtration which makes $\tau$ a stopping time.~We denote by
$\mathbb{F}^\tau$ the standard \textit{progressive enlargement of $\mathbb{F}$ by $\tau$}, that is
$$\mathcal{F}^\tau_t:=\bigcap_{s>t}\mathcal{F}_s\vee\sigma(\tau\wedge s).$$
As well-known, any $\mathbb{F}$-stopping time $\tau$ satisfies
 \begin{equation*}
 \tau=\tau^{a, \mathbb{F}}\wedge\tau^{i, \mathbb{F}}
 \end{equation*}
where $\tau^{a, \mathbb{F}}$ and $\tau^{i, \mathbb{F}}$ are the \textit{$\mathbb{F}$-accessible component of $\tau$} and the \textit{$\mathbb{F}$-totally inaccessible component of $\tau$}, respectively (see e.g.~Theorem 3,
 page 104, in \cite{Prott}).~More precisely,
there exist two disjoint events $A$ and $B$ such that  $P$-a.s.
$$A\cup B=(\tau<\infty)$$
 and
 \begin{equation}\label{eq-time-decomposition}
   \tau^{a, \mathbb{F}}=\tau\, \mathbb{I}_A + \infty\, \mathbb{I}_{A^c},\;\;\;\tau^{i, \mathbb{F}}=\tau\, \mathbb{I}_B + \infty\, \mathbb{I}_{B^c}.
 \end{equation}
Therefore,
 $$\tau\; \mathbb{I}_{\tau<+\infty}=\tau^{a, \mathbb{F}}\, \mathbb{I}_A + \tau^{i, \mathbb{F}}\, \mathbb{I}_B.$$
If $\sigma$ is any $\mathbb{F}$-predictable stopping time, then by definition
 \begin{equation*}
 P(\tau^{i, \mathbb{F}}=\sigma<+\infty)=0,
 \end{equation*}
As far as $\tau^{a, \mathbb{F}}$ is concerned,
 \begin{equation}\label{eq: accessible-r-t}
 [[\tau^{a, \mathbb{F}}]]\subset\bigcup_m [[\tau^{a, \mathbb{F}}_m]],
 \end{equation}
where $(\tau^{a, \mathbb{F}}_m)_{m}$ is a sequence of $\mathbb{F}$-predictable stopping times.~Such a sequence is not unique and it may be chosen in such a way that the corresponding graphs are pairwise disjoint (see Theorem 3.31, page 95, in \cite{he-wang-yan92}).~Any sequence
of $\mathbb{F}$-predictable stopping times  $(\tau^{a, \mathbb{F}}_m)_{m}$ satisfying (\ref{eq: accessible-r-t}) and with pairwise disjoint graphs takes the name of \textit{enveloping sequence of $\tau^{a, \mathbb{F}}$}.\vspace{.1em}\\
\begin{remark}
Note that if  $\mathbb{B}$ is a filtration different from $\mathbb{F}$ but such that $\tau$ is also $\mathbb{B}$-stopping time, then the $\mathbb{B}$-accessible component $\tau^{a, \mathbb{B}}$ is in general different from $\tau^{a, \mathbb{F}}$ and analogously $\tau^{i, \mathbb{B}}$ is in general different from $\tau^{i, \mathbb{F}}$.~Accessibility (totally inaccessibility) may get lost by restriction (expansion) of the filtration, that is an accessible (totally inaccessible) stopping time may be totally inaccessible (accessible)  w.r.t.~a smaller (bigger) filtration.
\end{remark}
\vspace{.1em}
\noindent In the next we refer to $\tau^{a,\sigma(\tau\wedge\cdot)}$ as the \textit{naturally accessible component of $\tau$}, and analogously to $\tau^{i,\sigma(\tau\wedge\cdot)}$ as the \textit{naturally totally inaccessible component of $\tau$}.\bigskip\\
A random time $\tau$ is \textit{$\mathbb{F}$-accessible} when $P(\tau=\tau^{a,\mathbb{F}})=1$, and analogously $\tau$ is  \textit{$\mathbb{F}$-totally inaccessible}, when $P(\tau=\tau^{i,\mathbb{F}})=1$ (if $\mathbb{F}=\sigma(\tau\wedge\cdot)$  \textit{naturally accessible} and \textit{naturally totally inaccessible}, respectively).\vspace{.1em}\\
\begin{remark}\label{rem:predictability}
It is worthwhile to recall that a naturally accessible stopping time cannot be naturally predictable (unless it coincides with a positive constant) and has an atomic law.~The law of a naturally totally inaccessible stopping time, instead, is diffusive (see Theorem IV-107, page 241, in \cite{del-me-a}).
\end{remark}
\vspace{.1em}
\noindent In the following, for any $\mathbb{F}$-stopping time $\tau$, $H^{\tau,\mathbb{F}}$ stays for  the \textit{$\mathbb{F}$-compensated occurrence process of $\tau$}, that is the $\mathbb{F}$-martingale obtained by $\mathbb{F}$-compensation of $\mathbb{I}_{\tau\leq \cdot}$~In formulas,
$$H^{\tau,\mathbb{F}}:=\mathbb{I}_{\tau\leq\cdot}-A^{\tau,\mathbb{F}},$$
where $A^{\tau,\mathbb{F}}$ is the $\mathbb{F}$-predictable compensator of $\mathbb{I}_{\tau\leq \cdot}$ (briefly, \textit{$\mathbb{F}$-compensator} of $\tau$).\vspace{.1em}\\
\begin{remark}\label{rem pred}
If $\tau$ is $\mathbb{F}$-predictable, then trivially $H^{\tau,\mathbb{F}}\equiv 0$.
\end{remark}
\vspace{.1em}
\begin{definition}\label{def:avoidance}
A random time $\tau$ satisfies Hypothesis ($\bold{\mathcal{A}}$) w.r.t.~$\mathbb{F}$ if $\tau$ avoids $\mathbb{F}$-stopping times, that is, if  $P(\tau = T < +\infty) = 0$ for every $\mathbb{F}$-stopping time $T$.
\end{definition}
\vspace{.1em}
\begin{remark}\label{rem:avoidance tot.in.ty}
If $\tau$ satisfies Hypothesis ($\bold{\mathcal{A}}$) w.r.t.~$\mathbb{F}$, then the $\mathbb{F}^\tau$-compensator of $\tau$ is continuous (see Lemma 3.6 in \cite{jean-coku-nike12}) and therefore $\tau$ is $\mathbb{F}^\tau$-totally inaccessible (see Theorem 1.43 in \cite{aksamit-jean-17}).
\end{remark}
\vspace{.1em}
\noindent A random time $\tau$ is called \textit{$\mathbb{F}$-thin} if
 $$\mathbb{I}_{\{\tau<\infty\}}\tau=\sum_{n}  T_n \mathbb{I}_{C_n},$$ where $T_n, n\geq 1,$ are  $\mathbb{F}$-stopping
times with disjoint graphs and, for all $n\geq 1$,
\begin{equation}\label{def:C_n} C_n=(\tau=T_n<\infty).\end{equation}
\noindent The sequence $(T_n)_{n\geq 1}$ is called an \textit{$\mathbb{F}$-exhausting sequence of $\tau$}, and  the event $C_n$ belongs to $\mathcal{F}^{\tau}_\infty$.\bigskip\\
For any random time $\tau$ the \textit{$\mathbb{F}$-thin-thick decomposition} holds, that is
 \begin{equation}\label{eq-eta-dec}
 \tau=\tau_1\wedge\tau_2,
 \end{equation}
where $\tau_1$ is $\mathbb{F}$-thin and is called the \textit{$\mathbb{F}$-thin part} of $\tau$, and $\tau_2$ satisfies Hypothesis ($\bold{\mathcal{A}}$) w.r.t.~$\mathbb{F}$ and is called the \textit{$\mathbb{F}$-thick part} of $\tau$ (see \cite{ak-chou-jean-18}).\\
Any $\mathbb{F}$-stopping time coincides with its $\mathbb{F}$-thin part and has a trivial $\mathbb{F}$-thick part.\vspace{.1em}\\
\begin{remark}\label{rem:composition}
$\mathbb{F}^{\tau}$ coincides with $\big(\mathbb{F}^{\tau_1}\big)^{\tau_2}=\big(\mathbb{F}^{\tau_2}\big)^{\tau_1}$  (see Theorem 5.1 in \cite{ak-chou-jean-18}).
\end{remark}
\vspace{.1em}
\noindent As usual, we write \textit{$\mathbb{F}\hookrightarrow\mathbb{F}^\tau$}, when any $\mathbb{F}$-local martingale is also a $\mathbb{F}^\tau$-local martingale, that is when the \textit{immersion property of  $\mathbb{F}$ in $\mathbb{F}^\tau$} holds.
\section{Some auxiliary results}
In this section, we introduce the class of those random times, that may overlap $\mathbb{F}$-predictable stopping times only.~When  $\mathbb{F}$ is the Brownian filtration all random times belong to this class.\vspace{.1em}\\
\begin{definition}\label{def:P}
A random time $\tau$ satisfies Hypothesis ($\bold{\mathcal{P}}$) w.r.t.~$\mathbb{F}$ when it admits a nontrivial $\mathbb{F}$-thin part $\tau_1$ with an exhausting sequence $(T_n)_{n}$ of $\mathbb{F}$-predictable stopping times.
\end{definition}
\vspace{.1em}
\begin{remark}\label{rem:increasing}
Let $\tau$ be a random time satisfying Hypothesis ($\bold{\mathcal{P}}$) w.r.t.~$\mathbb{F}$.~Then w.l.o.g.~the exhausting sequence $(T_n)_{n}$ can be chosen a.s.~increasing.
\end{remark}
\vspace{.1em}
\begin{proposition}\label{prop:0}
Any random time satisfies Hypothesis ($\bold{\mathcal{P}}$) w.r.t.~the natural Brownian filtration.
\end{proposition}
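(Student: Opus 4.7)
The plan is to reduce the proposition to the structural fact that every $\mathbb{F}$-stopping time is $\mathbb{F}$-predictable when $\mathbb{F}$ is the natural Brownian filtration. Once this is known, the $\mathbb{F}$-exhausting sequence $(T_n)_{n}$ attached to the $\mathbb{F}$-thin part $\tau_1$ of $\tau$ via the decomposition (\ref{eq-eta-dec}) automatically consists of $\mathbb{F}$-predictable stopping times, which is exactly Hypothesis $(\mathcal{P})$.

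To establish this structural fact, I would fix an arbitrary $\mathbb{F}$-stopping time $T$ and analyse its Doob--Meyer decomposition $\mathbb{I}_{T\leq\cdot}=H^{T,\mathbb{F}}+A^{T,\mathbb{F}}$. The Brownian predictable representation property forces every $\mathbb{F}$-martingale to be continuous (all stochastic integrals against the Brownian motion are continuous), so $H^{T,\mathbb{F}}$ is continuous. Since $\mathbb{I}_{T\leq\cdot}$ and $A^{T,\mathbb{F}}$ then share the same jumps, the unit jump of $\mathbb{I}_{T\leq\cdot}$ at $T$ on $\{T<+\infty\}$ is absorbed by the predictable process $A^{T,\mathbb{F}}$. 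As the jumps of any predictable c\`adl\`ag process are exhausted by predictable stopping times (cf.\ Theorem 3.32 in \cite{he-wang-yan92}), the graph of $T$ is contained in a union of graphs of predictable stopping times, hence $T$ is $\mathbb{F}$-accessible; the quasi-left continuity of the Brownian filtration then promotes accessibility to predictability.

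The main delicacy is interpretational rather than technical: Definition \ref{def:P} requires $\tau_1$ to be nontrivial, so random times that avoid every $\mathbb{F}$-stopping time (for instance an $\mathbb{F}$-independent variable with diffuse law) do not formally satisfy Hypothesis $(\mathcal{P})$. The intended reading of the proposition is that whenever $\tau$ does have a nontrivial thin part---the only interesting case in the remainder of the paper---the predictability of the exhausting sequence comes for free from the Brownian structure, and no further technical work is required.
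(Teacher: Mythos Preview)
Your proposal is correct and follows exactly the paper's approach: the paper's entire proof is the single sentence ``Any stopping time w.r.t.\ the natural Brownian filtration is predictable,'' which you also take as the key reduction. You simply add a justification of this classical fact and flag the nontriviality caveat in Definition~\ref{def:P}, but the underlying argument is the same.
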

\begin{proof}
Any stopping time w.r.t.~the natural Brownian filtration is predictable.
\end{proof}
\vspace{.1em}
\begin{proposition}\label{prop:1}
Let $\tau$ be a random time satisfying Hypothesis ($\bold{\mathcal{P}}$) w.r.t.~$\mathbb{F}$.~Then:\\
(i)
$\tau_1$ is an $\mathbb{F}^{\tau_1}$-accessible stopping time with enveloping sequence $(T_n)_{n}$;\\
(ii)  $\tau_1$ and $\tau_2$ are the $\mathbb{F}^{\tau}$-accessible component and the  $\mathbb{F}^{\tau}$-totally inaccessible component of $\tau$, respectively, that is
$\tau_1=\tau^{a, \mathbb{F}^{\tau}}$ and $\tau_2=\tau^{i, \mathbb{F}^{\tau}}$.
\end{proposition}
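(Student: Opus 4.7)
The plan is to handle (i) directly from the hypothesis and then deduce (ii) by combining (i) with the structure of the thin-thick decomposition and with the classical reduction of stopping times of a progressively enlarged filtration.

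For (i), I will observe that $\tau_1$ is trivially an $\mathbb{F}^{\tau_1}$-stopping time since $\sigma(\tau_1\wedge\cdot)\subset\mathbb{F}^{\tau_1}$, and that each $T_n$ remains $\mathbb{F}^{\tau_1}$-predictable because $\mathbb{F}\subset\mathbb{F}^{\tau_1}$. The identity $\mathbb{I}_{\{\tau_1<\infty\}}\tau_1=\sum_n T_n\mathbb{I}_{C_n}$, combined with the pairwise disjointness of the $[[T_n]]$ built into the definition of $\mathbb{F}$-thin time, yields $[[\tau_1]]\subset\bigcup_n[[T_n]]$ up to a $P$-null set; hence $(T_n)_n$ is an enveloping sequence of $\tau_1$ in $\mathbb{F}^{\tau_1}$, proving $\mathbb{F}^{\tau_1}$-accessibility.

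For (ii), the accessibility of $\tau_1$ with respect to $\mathbb{F}^\tau$ follows from (i) together with Remark \ref{rem:composition}: the inclusion $\mathbb{F}^{\tau_1}\subset\mathbb{F}^\tau$ preserves the predictability of the $T_n$'s, and the same enveloping sequence can be used. The heart of the argument is therefore to show that $\tau_2$ is $\mathbb{F}^\tau$-totally inaccessible. I plan to obtain this by proving that $\tau_2$ satisfies Hypothesis ($\bold{\mathcal{A}}$) with respect to $\mathbb{F}^{\tau_1}$: combining this with the identity $\mathbb{F}^\tau=(\mathbb{F}^{\tau_1})^{\tau_2}$ of Remark \ref{rem:composition} and with Remark \ref{rem:avoidance tot.in.ty}, total inaccessibility with respect to $\mathbb{F}^\tau$ will follow at once.

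To verify Hypothesis ($\bold{\mathcal{A}}$) for $\tau_2$ w.r.t.~$\mathbb{F}^{\tau_1}$, I would fix an arbitrary $\mathbb{F}^{\tau_1}$-stopping time $S$ and invoke the classical reduction lemma for progressively enlarged filtrations, which furnishes an $\mathbb{F}$-stopping time $\tilde S$ with $S=\tilde S$ on $\{S<\tau_1\}$. Splitting $\{\tau_2=S<\infty\}$ into $\{\tau_2=S<\tau_1\}$ and $\{\tau_2=S\geq\tau_1,\,\tau_2<\infty\}$, the first set is $P$-null because on it $\tau_2=\tilde S$ and $\tau_2$ avoids $\mathbb{F}$-stopping times, while the second set is $P$-null because the thin-thick construction of \cite{ak-chou-jean-18} forces $\{\tau_2<\infty\}\subset\{\tau_1=\infty\}$. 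The identification $\tau_1=\tau^{a,\mathbb{F}^\tau}$ and $\tau_2=\tau^{i,\mathbb{F}^\tau}$ then follows from the uniqueness of the decomposition (\ref{eq-time-decomposition}) applied with $A=\{\tau_1<\infty\}$ and $B=\{\tau_2<\infty\}$, which are disjoint and partition $\{\tau<\infty\}$ by the same construction. The main obstacle is the reduction step for $\mathbb{F}^{\tau_1}$-stopping times: it is not recalled in the excerpt but is a standard tool that should simply be cited rather than re-proved.
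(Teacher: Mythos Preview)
Your proposal is correct and follows the same overall strategy as the paper: for (i) you both use that $\mathbb{F}$-predictability of the $T_n$'s is preserved in $\mathbb{F}^{\tau_1}$; for (ii) you both lift accessibility of $\tau_1$ to $\mathbb{F}^\tau$ via the inclusion $\mathbb{F}^{\tau_1}\subset\mathbb{F}^\tau$, establish that $\tau_2$ satisfies Hypothesis~($\mathcal{A}$) with respect to $\mathbb{F}^{\tau_1}$, and then invoke Remarks~\ref{rem:avoidance tot.in.ty} and~\ref{rem:composition}. The only difference is that the paper asserts the $\mathbb{F}^{\tau_1}$-avoidance of $\tau_2$ in one line (``by definition $\tau_1$ and $\tau_2$ have disjoint graphs and $\tau_2$ avoids $\mathbb{F}$-stopping times''), while you spell out the underlying mechanism via the reduction lemma for stopping times of a progressively enlarged filtration together with the disjointness $\{\tau_2<\infty\}\subset\{\tau_1=\infty\}$ --- this is a welcome clarification rather than a different route.
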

\begin{proof}
(i) It follows immediately considering that any $\mathbb{F}$-predictable stopping time is also $\mathbb{F}^{\tau_1}$-predictable.\\
(ii) Previous point implies that $\tau_1$ is $\mathbb{F}^{\tau}$-accessible, since $\mathbb{F}^{\tau_1}\subset \mathbb{F}^{\tau}$.~Moreover, $\tau_2$ satisfies Hypothesis ($\bold{\mathcal{A}}$) w.r.t.~$\mathbb{F}^{\tau_1}$, since by definition $\tau_1$ and $\tau_2$ have disjoint graphs and $\tau_2$ avoids $\mathbb{F}$-stopping times.~By Remark \ref{rem:avoidance tot.in.ty}
 $\tau_2$ is $\big(\mathbb{F}^{\tau_1}\big)^{\tau_2}$-totally inaccessible, that is $\tau_2$ is $\mathbb{F}^{\tau}$-totally inaccessible (see Remark \ref{rem:composition}).
\end{proof}
\vspace{.1em}
\begin{remark}\label{rem:trivial}
Observe that $\tau$ satisfies Hypothesis ($\bold{\mathcal{P}}$) w.r.t.~$\mathbb{F}$ with $T_n=t_n\in\mathbb{R}^+, \; n\geq 1,$ if and only if $\tau_1$ is naturally accessible.~In fact $\tau_1$ is naturally accessible if and only if it is a discrete random variable (see Remark \ref{rem:predictability}) and any deterministic time is predictable w.r.t.~any filtration.\\
Moreover, when $\tau_1$ is naturally accessible, the (possibly finite) sequence of atoms of the law of $\tau_1$ is both a naturally enveloping sequence and an $\mathbb{F}$-exhausting sequence of $\tau_1$.
\end{remark}
\vspace{.1em}
\begin{proposition}\label{lemma:eq:Delta}
Assume Hypothesis ($\bold{\mathcal{P}}$) w.r.t.~$\mathbb{F}$ for $\tau$.Then
\begin{equation}\label{eq:delta}\{\Delta H^{\tau_1,\mathbb{F}^{\tau_1}}\neq 0\}\subset\bigcup_{n}\, [[T_n]].\end{equation}
%
%
\end{proposition}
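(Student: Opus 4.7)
The plan is to decompose $H^{\tau_1,\mathbb{F}^{\tau_1}}=\mathbb{I}_{\tau_1\leq\cdot}-A^{\tau_1,\mathbb{F}^{\tau_1}}$ and locate the jumps of each summand separately. The jump set of $\mathbb{I}_{\tau_1\leq\cdot}$ is precisely $[[\tau_1]]$, and $\mathbb{F}$-thinness together with the identity $\mathbb{I}_{\{\tau_1<\infty\}}\tau_1=\sum_n T_n\mathbb{I}_{C_n}$ immediately gives $[[\tau_1]]\subset\bigcup_n[[T_n]]$, which settles the contribution of the first summand.

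The main work concerns the compensator $A:=A^{\tau_1,\mathbb{F}^{\tau_1}}$. Since $A$ is $\mathbb{F}^{\tau_1}$-predictable, its jumps occur only at $\mathbb{F}^{\tau_1}$-predictable stopping times, so I would pick an arbitrary such $S$ and use the defining property of the dual predictable projection to write
$$\Delta A_S\, \mathbb{I}_{\{S<\infty\}}=E^P[\mathbb{I}_{\{\tau_1=S<\infty\}}\mid\mathcal{F}^{\tau_1}_{S-}].$$
Because the graphs of the $T_n$'s are pairwise disjoint and $\tau_1=\sum_n T_n\mathbb{I}_{C_n}$ on $\{\tau_1<\infty\}$, the event $\{\tau_1=S<\infty\}$ coincides, up to a $P$-null set, with $\bigcup_n(\{T_n=S\}\cap C_n)$. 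By Proposition \ref{prop:1}(i) each $T_n$ is $\mathbb{F}^{\tau_1}$-predictable, and the classical fact that $\{T_n=S\}\in\mathcal{F}^{\tau_1}_{S-}$ for two predictable stopping times (see e.g.\ \cite{he-wang-yan92}) then forces $\Delta A_S$ to vanish off $\bigcup_n\{T_n=S<\infty\}$. This gives $\{\Delta A\neq 0\}\subset\bigcup_n[[T_n]]$ up to evanescence, and combining this with the first summand yields (\ref{eq:delta}).

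The only delicate point is the measurability step $\{T_n=S\}\in\mathcal{F}^{\tau_1}_{S-}$, which reflects the general principle that a predictable process (here $\mathbb{I}_{[[T_n]]}$) sampled at a predictable stopping time $S$ produces an $\mathcal{F}^{\tau_1}_{S-}$-measurable random variable; one can make this rigorous either via an announcing sequence for $T_n$ or through the standard generating system of $\mathcal{F}^{\tau_1}_{S-}$. Beyond this, no cancellation argument between the two contributions is required, since each of $\{\Delta\mathbb{I}_{\tau_1\leq\cdot}\neq 0\}$ and $\{\Delta A\neq 0\}$ is contained in $\bigcup_n[[T_n]]$ on its own.
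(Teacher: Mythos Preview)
Your argument is correct and complete: you locate the jumps of $\mathbb{I}_{\tau_1\leq\cdot}$ by thinness, and you locate the jumps of the compensator $A$ by evaluating $\Delta A_S=E^P[\mathbb{I}_{\{\tau_1=S<\infty\}}\mid\mathcal{F}^{\tau_1}_{S-}]$ at an arbitrary $\mathbb{F}^{\tau_1}$-predictable $S$ and pulling the indicator $\mathbb{I}_{\{T_n=S\}}$ out of the conditional expectation via the measurability fact you mention. The final passage from ``for every predictable $S$'' to the inclusion up to evanescence is routine, since $\{\Delta A\neq 0\}$ is a predictable thin set.

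The paper, however, takes a different and shorter route: it invokes an external result (Lemma~2.13 in \cite{caltor-accessible-23}) together with Proposition~\ref{prop:1}(i) to write down the explicit representation
\[
H^{\tau_1,\mathbb{F}^{\tau_1}}_\cdot=\sum_{n}\big(\mathbb{I}_{C_n}-P(C_n\mid\mathcal{F}^{\tau_1}_{T_n^-})\big)\mathbb{I}_{\{T_n\le\cdot\}},
\]
from which (\ref{eq:delta}) is immediate. The trade-off is clear. Your approach is self-contained and needs only the defining property of the dual predictable projection; it proves exactly (\ref{eq:delta}) and nothing more. The paper's approach imports a stronger statement---the full closed-form expression for $H^{\tau_1,\mathbb{F}^{\tau_1}}$---and this formula (labelled (\ref{eq-comp222})) is reused several times later, in the proof of Theorem~\ref{thm:new}(ii), in Remark~\ref{rem:generalized}, and in the L\'evy-transformation example of Section~\ref{sec4}. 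In fact your computation is essentially a step towards that formula: once you know $\Delta A_{T_n}=P(C_n\mid\mathcal{F}^{\tau_1}_{T_n-})$ and that $A$ jumps only on $\bigcup_n[[T_n]]$, it remains only to argue that $A$ is purely discontinuous to recover (\ref{eq-comp222}).
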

\begin{proof}
Relation  (\ref{eq:delta}) derives immediately from the equality
\begin{equation}\label{eq-comp222}
H^{\tau_1, \mathbb{F}^{\tau_1}}_\cdot=\sum_{n}\big(\mathbb{I}_{C_n}-P(C_n\mid\mathcal{F}^{\tau_1}_{T_n^-})\big)\mathbb{
I}_{\{T_n\le
  \cdot\}}.
\end{equation}
The above representation  follows by Lemma 2.13 in \cite{caltor-accessible-23} joint with point (i) of Proposition \ref{prop:1} (see (\ref{def:C_n})).
\end{proof}
\section{An $\mathbb{F}^\tau$-basis}\label{sec3}
In this section, we prove the main theorem of this paper.~We work on a probability space $(\Omega,\mathbb{F},P)$ with a filtration $\mathbb{F}$ and a random time $\tau$.~Our theorem deals with constructing a basis on $\mathbb{F}^\tau$, that is on the progressive enlargement of $\mathbb{F}$ by a random time $\tau$.~The idea is to overcome the $\mathbb{F}$-avoidance
condition on $\tau$ allowing the $\mathbb{F}$-thin part of $\tau$ to be nontrivial and not an $\mathbb{F}$-stopping time.~The starting point is the equality $\sigma(\tau\wedge\cdot)=\sigma(\tau_1\wedge\cdot)\vee\sigma(\tau_2\wedge\cdot)$, which suggests looking at $\mathbb{F}^\tau$ as the progressive enlargement by $\tau_2$ of the filtration obtained progressively enlarging $\mathbb{F}$ by $\tau_1$.\bigskip\\
We consider the following setting.\vspace{.1em}\\
\textbf{$(\bold{\mathcal{S}})$}: \textit{$\mathbb{F}$ is a standard filtration with trivial initial $\sigma$-algebra;}
\textit{$(M^j)_{j\in J}$, $J\subset\mathbb{N}$, is an $\mathbb{F}$-basis (see Definition \ref{def:basis});}
\textit{$\tau$  is a random time which satisfies Hypothesis $(\bold{\mathcal{P}})$ w.r.t.~$\mathbb{F}$   (see Definition \ref{def:P}).}\vspace{.1em}\\
We need to introduce a \textit{continuity condition} on $\mathbb{F}$.\vspace{.1em}\\
Condition \textbf{$(\bold{\mathcal{C}})$}: \textit{For any $n\geq 1$, $\mathcal{F}_{T_n}=\mathcal{F}_{T_n^-}$.}
\vspace{.1em}
\begin{remark}
Condition \textbf{$(\bold{\mathcal{C}})$}  is equivalent to assuming that, for all $j\in J$ and for all $n\geq 1$, $\Delta M^j_{T_n}\equiv 0$.
\end{remark}
\vspace{.1em}
\begin{lemma}\label{cor}
    Let the setting \textbf{$(\bold{\mathcal{S}})$} be in force.~If $\mathbb{F}\hookrightarrow\mathbb{F}^\tau$, then all elements of the family $\big\{(M^j)_{j\in J}, H^{\tau_1,\mathbb{F}^{\tau_1}}\big\}$ are both $\mathbb{F}^{\tau_1}$-martingales and $\mathbb{F}^\tau$-martingales.~Moreover:\\
(i) \begin{equation}\label{eq02}\big[H^{\tau_1,\mathbb{F}^{\tau_1}}, H^{\tau_2,\mathbb{F}^\tau}\big]_\cdot\equiv 0\;\;\;a.s.\end{equation}
and therefore, $H^{\tau_1,\mathbb{F}^{\tau_1}}$ and  $H^{\tau_2,\mathbb{F}^{\tau}}$ are $\mathbb{F}^{\tau}$-orthogonal martingales;\\
(ii) if Condition \textbf{$({\mathcal{C}})$} holds, then, for all $j\in J$,
\begin{equation}\label{eq01}\big[H^{\tau_1,\mathbb{F}^{\tau_1}}, M^j\big]_\cdot\equiv 0\;\;\;a.s.\end{equation}
and therefore, $M^j$ and $H^{\tau_1,\mathbb{F}^{\tau_1}}$ are both $\mathbb{F}^{\tau_1}$-orthogonal martingales and $\mathbb{F}^{\tau}$-orthogonal martingales.
\end{lemma}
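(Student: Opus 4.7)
My plan is to break the lemma into four tasks: (a) transfer the $M^j$ to the larger filtrations, (b) transfer $H^{\tau_1,\mathbb{F}^{\tau_1}}$ to $\mathbb{F}^\tau$, (c) prove orthogonality (i), and (d) prove orthogonality (ii). For (a), every $M^j$ is an $\mathbb{F}^\tau$-martingale by the immersion hypothesis $\mathbb{F}\hookrightarrow\mathbb{F}^\tau$; to get that $M^j$ is also an $\mathbb{F}^{\tau_1}$-martingale I would apply the tower property along the chain $\mathbb{F}\subseteq\mathbb{F}^{\tau_1}\subseteq\mathbb{F}^\tau$: for $s\le t$, $E[M^j_t\mid\mathcal{F}^{\tau_1}_s]=E[E[M^j_t\mid\mathcal{F}^\tau_s]\mid\mathcal{F}^{\tau_1}_s]=E[M^j_s\mid\mathcal{F}^{\tau_1}_s]=M^j_s$, since $M^j_s$ is $\mathcal{F}_s$-measurable. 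For (b), the $\mathbb{F}^{\tau_1}$-martingale property of $H^{\tau_1,\mathbb{F}^{\tau_1}}$ is by definition; to promote it to $\mathbb{F}^\tau=(\mathbb{F}^{\tau_1})^{\tau_2}$ I would use the explicit representation (\ref{eq-comp222}) term by term and check that each summand $(\mathbb{I}_{C_n}-P(C_n\mid\mathcal{F}^{\tau_1}_{T_n^-}))\mathbb{I}_{\{T_n\le\cdot\}}$ remains a martingale after enlarging by $\tau_2$, exploiting that $\tau_2$ avoids the $\mathbb{F}$-predictable times $T_n$.

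For (c), both $H^{\tau_1,\mathbb{F}^{\tau_1}}$ and $H^{\tau_2,\mathbb{F}^\tau}$ are of integrable variation (indicator processes minus their predictable compensators), so
\[
\big[H^{\tau_1,\mathbb{F}^{\tau_1}},H^{\tau_2,\mathbb{F}^\tau}\big]_\cdot=\sum_{s\le\cdot}\Delta H^{\tau_1,\mathbb{F}^{\tau_1}}_s\,\Delta H^{\tau_2,\mathbb{F}^\tau}_s.
\]
By Proposition \ref{lemma:eq:Delta} the jumps of $H^{\tau_1,\mathbb{F}^{\tau_1}}$ sit inside $\bigcup_n[[T_n]]$, while by Remark \ref{rem:avoidance tot.in.ty} the compensator of $H^{\tau_2,\mathbb{F}^\tau}$ is continuous and the only possible jump of $H^{\tau_2,\mathbb{F}^\tau}$ is at $\tau_2$. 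Since $\tau_2$ satisfies Hypothesis $(\bold{\mathcal{A}})$ w.r.t.\ $\mathbb{F}$, one has $P(\tau_2=T_n<+\infty)=0$ for every $n$, so the two processes have no common jumps and (\ref{eq02}) follows. Orthogonality in $\mathbb{F}^\tau$ is then immediate because both are $\mathbb{F}^\tau$-martingales with $[\cdot,\cdot]\equiv 0$.

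For (d), the remark following Condition $(\bold{\mathcal{C}})$ reads $\Delta M^j_{T_n}\equiv 0$ for every $j\in J$ and $n\ge 1$. Since $H^{\tau_1,\mathbb{F}^{\tau_1}}$ is of finite variation I would write
\[
\big[H^{\tau_1,\mathbb{F}^{\tau_1}},M^j\big]_\cdot=\sum_{s\le\cdot}\Delta H^{\tau_1,\mathbb{F}^{\tau_1}}_s\,\Delta M^j_s,
\]
and by Proposition \ref{lemma:eq:Delta} this sum is supported on $\bigcup_n[[T_n]]$, where $\Delta M^j$ vanishes by $(\bold{\mathcal{C}})$. Hence (\ref{eq01}) holds, and the strong orthogonality in both $\mathbb{F}^{\tau_1}$ and $\mathbb{F}^\tau$ follows from the martingale properties already established in step (a).

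The main obstacle I anticipate is step (b): the assumed immersion $\mathbb{F}\hookrightarrow\mathbb{F}^\tau$ does not automatically propagate to $\mathbb{F}^{\tau_1}\hookrightarrow(\mathbb{F}^{\tau_1})^{\tau_2}=\mathbb{F}^\tau$. To close this gap one has to use the avoidance of the $T_n$ by $\tau_2$ together with the thin structure of $\tau_1$ to show that the $\mathbb{F}^{\tau_1}$-compensator of $\mathbb{I}_{\tau_1\le\cdot}$ is already the $\mathbb{F}^\tau$-compensator, equivalently that $P(C_n\mid\mathcal{F}^{\tau_1}_{T_n^-})=P(C_n\mid\mathcal{F}^\tau_{T_n^-})$ for all $n$. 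Once this identification is in place, every other step above is routine jump bookkeeping.
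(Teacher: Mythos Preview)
Your arguments for (a), (c) and (d) are essentially identical to the paper's: reduce the covariations to sums of products of jumps using that $H^{\tau_1,\mathbb{F}^{\tau_1}}$ has finite variation, localize the jumps of $H^{\tau_1,\mathbb{F}^{\tau_1}}$ in $\bigcup_n[[T_n]]$ via Proposition~\ref{lemma:eq:Delta}, and then kill the products using either the $\mathbb{F}$-avoidance of $\tau_2$ (for (i)) or Condition~$(\mathcal{C})$ (for (ii)). That part is fine and matches the paper line by line.

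The one place where you diverge is step (b), which you flag as the ``main obstacle''. In fact there is no obstacle: the implication $\mathbb{F}\hookrightarrow\mathbb{F}^\tau\Rightarrow\mathbb{F}^{\tau_1}\hookrightarrow\mathbb{F}^\tau$ is a known result for the thin--thick decomposition (Proposition~5.4 in \cite{ak-chou-jean-18}), and the paper simply quotes it. Once $\mathbb{F}^{\tau_1}\hookrightarrow\mathbb{F}^\tau$ is in hand, $H^{\tau_1,\mathbb{F}^{\tau_1}}$ is an $\mathbb{F}^\tau$-martingale immediately, with no need for your proposed term-by-term verification of (\ref{eq-comp222}) or for the identity $P(C_n\mid\mathcal{F}^{\tau_1}_{T_n^-})=P(C_n\mid\mathcal{F}^\tau_{T_n^-})$. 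Your workaround would also succeed, but it reproves a special case of a result already available in the literature; citing it, as the paper does, is the cleaner route.
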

\begin{proof}
$\mathbb{F}\hookrightarrow\mathbb{F}^\tau$ implies both  $\mathbb{F}\hookrightarrow\mathbb{F}^{\tau_1}$ and $\mathbb{F}^{\tau_1}\hookrightarrow\mathbb{F}^\tau$, so that the $M^j$'s are both $\mathbb{F}^{\tau_1}$ and $\mathbb{F}^{\tau}$-martingales and $H^{\tau_1,\mathbb{F}^{\tau_1}}$ is an $\mathbb{F}^{\tau}$-martingale (see Proposition 5.4 in \cite{ak-chou-jean-18}).~By construction $H^{\tau_2,\mathbb{F}^\tau}$ is an $\mathbb{F}^\tau$-martingale.~Therefore, all covariation processes in (\ref{eq02}) and (\ref{eq01}) are well-defined.~Moreover, since $H^{\tau_1,\mathbb{F}^{\tau_1}}$ is a process of finite variation,
\begin{equation}\label{eq:cov1}\big[H^{\tau_1, \mathbb{F}^{\tau_1}}, H^{\tau_2,\mathbb{F}^\tau}\big]_\cdot=\sum_{s\leq \cdot}\Delta H^{\tau_1, \mathbb{F}^{\tau_1}}_s\,\Delta H^{\tau_2,\mathbb{F}^\tau}_s,\end{equation}
and
\begin{equation}\label{eq:cov2}\big[H^{\tau_1,\mathbb{F}^{\tau_1}},M^j\big]_\cdot=\sum_{s\leq \cdot}\Delta H^{\tau_1,\mathbb{F}^{\tau_1}}_s\,\Delta M^j_s.\end{equation}
(i) $H^{\tau_2,\mathbb{F}^\tau}$ jumps at $\tau_2$ (see Remark \ref{rem:avoidance tot.in.ty}) and Proposition \ref{lemma:eq:Delta} holds so that for all $t$
$$\Big\{\sum_{s\leq t}\Delta H^{\tau_1,\mathbb{F}^{\tau_1}}_s\,\Delta H^{\tau_2,\mathbb{F}^\tau}_s\neq 0\Big\}\;
\subset \;\bigcup_{n} \{\tau_2=T_n\}.$$
Therefore, (\ref{eq02}) follows by (\ref{eq:cov1})
and the $\mathbb{F}$-avoidance property of $\tau_2$.\bigskip\\
(ii) Proposition \ref{lemma:eq:Delta} states that $\{\Delta H^{\tau_1,\mathbb{F}^{\tau_1}}\neq 0\}$ is an $\mathbb{F}$-thin set (see Definition 3.18, p.89, in \cite{he-wang-yan92}) with exhausting sequence $(T_n)_{n\geq 1}$, while Condition \textbf{$(\mathcal{C})$} implies that any martingale $M^j$ cannot jump at any time $T_n$.~Hence, (\ref{eq01}) derives immediately from (\ref{eq:cov2}).
\end{proof}
\vspace{.1em}
\noindent We are now able to prove the main result of this paper.~In its proof, we will make use several times of Jacod-Yor's Lemma (see Theorem 7 in \cite{davis2}).\vspace{.1em}\\
\begin{thm}\label{thm:new}
Let the setting \textbf{$(\mathcal{S})$} and the Condition \textbf{$(\mathcal{C})$} be in force.\\
(i) If $\mathbb{F}\hookrightarrow\mathbb{F}^{\tau_1}$, then the family $\Big((M^j)_{j\in J},H^{\tau_1, \mathbb{F}^{\tau_1}}\Big)$ is an $\mathbb{F}^{\tau_1}$-basis.\\
(ii) If $\mathbb{F}\hookrightarrow\mathbb{F}^{\tau}$, then the family $\Big((M^j)_{j\in J}, H^{\tau,\mathbb{F}^\tau}\Big)$ is an $\mathbb{F}^\tau$-basis.
\end{thm}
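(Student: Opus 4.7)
The plan is to verify the Jacod--Yor criterion (Theorem~7 in \cite{davis2}) for the two candidate families, exploiting the pairwise strong orthogonality already established in Lemma~\ref{cor}: a collection of pairwise orthogonal (local) martingales is a basis of its filtration if and only if $P$ is the unique probability measure equivalent to $P$ on the terminal $\sigma$-algebra under which every element of the collection remains a local martingale.

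For part~(i), I would take a probability measure $Q\sim P$ on $\mathcal{F}^{\tau_1}_\infty$ such that every $M^j$ and $H^{\tau_1,\mathbb{F}^{\tau_1}}$ is a $(Q,\mathbb{F}^{\tau_1})$-local martingale, and aim to show $Q=P$. First, since each $M^j$ is $\mathbb{F}$-adapted, a projection onto the sub-filtration $\mathbb{F}$ shows that it is also a $(Q,\mathbb{F})$-local martingale; a first application of Jacod--Yor to the $\mathbb{F}$-basis $(M^j)_{j\in J}$ then yields $Q|_{\mathcal{F}_\infty}=P|_{\mathcal{F}_\infty}$. Next, I would exploit formula~(\ref{eq-comp222}): $H^{\tau_1,\mathbb{F}^{\tau_1}}$ is purely discontinuous with jumps at the $\mathbb{F}^{\tau_1}$-predictable times $T_n$, so the $Q$-martingale property translates into the identities $E^Q[\mathbb{I}_{C_n}\mid\mathcal{F}^{\tau_1}_{T_n^-}]=P(C_n\mid\mathcal{F}^{\tau_1}_{T_n^-})$ for every $n$. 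Together with the equality on $\mathcal{F}_\infty$, the representation $\mathbb{I}_{\tau_1<\infty}\tau_1=\sum_n T_n\mathbb{I}_{C_n}$, and Condition~$(\mathcal{C})$ (no $\mathbb{F}$-jump at any $T_n$), these relations are enough to pin down the conditional law of $\tau_1$ under $Q$ and thus force $Q=P$ on $\mathcal{F}^{\tau_1}_\infty=\mathcal{F}_\infty\vee\sigma(C_n,\,n\geq 1)$.

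For part~(ii), the immersion $\mathbb{F}\hookrightarrow\mathbb{F}^\tau$ factors through $\mathbb{F}^{\tau_1}$ (by Remark~\ref{rem:composition} and Proposition~5.4 of \cite{ak-chou-jean-18}), so part~(i) already supplies an $\mathbb{F}^{\tau_1}$-basis. Proposition~\ref{prop:1}(ii) guarantees that $\tau_2$ satisfies Hypothesis~$(\mathcal{A})$ w.r.t.~$\mathbb{F}^{\tau_1}$, while $\mathbb{F}^{\tau_1}\hookrightarrow(\mathbb{F}^{\tau_1})^{\tau_2}=\mathbb{F}^\tau$ is preserved, so I would invoke the existing avoidance-type representation theorem (\cite{kusuoka99}, \cite{jean-coku-nike12}) to promote the $\mathbb{F}^{\tau_1}$-basis to an $\mathbb{F}^\tau$-basis of the form $\bigl((M^j)_{j\in J},H^{\tau_1,\mathbb{F}^{\tau_1}},H^{\tau_2,\mathbb{F}^\tau}\bigr)$. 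Finally, since $\tau_1$ and $\tau_2$ have disjoint graphs and the compensator of $\mathbb{I}_{\tau_1\leq\cdot}$ agrees under immersion, one has $H^{\tau,\mathbb{F}^\tau}=H^{\tau_1,\mathbb{F}^{\tau_1}}+H^{\tau_2,\mathbb{F}^\tau}$; because the supports $\bigcup_n[[T_n]]$ and $[[\tau_2]]$ are a.s.~disjoint (the $\mathbb{F}$-avoidance of $\tau_2$), any pair of predictable integrands against the two summands can be recombined into a single integrand against $H^{\tau,\mathbb{F}^\tau}$ by splitting on $\bigcup_n[[T_n]]$ and its complement. This delivers the desired $\mathbb{F}^\tau$-basis $\bigl((M^j)_{j\in J},H^{\tau,\mathbb{F}^\tau}\bigr)$.

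The principal obstacle is the middle step of (i): the $Q$-martingale condition on $H^{\tau_1,\mathbb{F}^{\tau_1}}$ only gives information at the levels $\mathcal{F}^{\tau_1}_{T_n^-}$, and it must be carefully combined with $Q|_{\mathcal{F}_\infty}=P|_{\mathcal{F}_\infty}$ and Condition~$(\mathcal{C})$ to propagate these partial conditional identifications to full agreement of $Q$ and $P$ on $\mathcal{F}^{\tau_1}_\infty$; the rest of the argument is essentially bookkeeping around Jacod--Yor and the known avoidance-case result.
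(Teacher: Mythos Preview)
Your plan coincides with the paper's proof almost step for step: Jacod--Yor for part~(i), then the avoidance-case theorem (the paper cites Theorem~4.9 of \cite{ditella-eng-22}) for part~(ii), followed by the same splitting of integrands on $\mathcal{D}=\bigcup_n[[T_n]]$ to merge $H^{\tau_1,\mathbb{F}^\tau}$ and $H^{\tau_2,\mathbb{F}^\tau}$ into $H^{\tau,\mathbb{F}^\tau}$.

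The one place where your sketch is genuinely incomplete is exactly the step you flag as the ``principal obstacle'', and it is not just bookkeeping. The paper's mechanism for passing from $Q|_{\mathcal{F}_\infty}=P|_{\mathcal{F}_\infty}$ and $Q(C_n\mid\mathcal{F}^{\tau_1}_{T_n^-})=P(C_n\mid\mathcal{F}^{\tau_1}_{T_n^-})$ to $Q=P$ on $\mathcal{F}^{\tau_1}_\infty$ uses two ingredients you do not name. First, from the fact that every $M^j$ is a $(Q,\mathbb{F}^{\tau_1})$-martingale and that $(M^j)_{j\in J}$ is an $\mathbb{F}$-basis, one deduces that the immersion $\mathbb{F}\hookrightarrow\mathbb{F}^{\tau_1}$ holds under $Q$ as well (not only under $P$). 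Second, immersion is used through its Br\'emaud--Yor characterization (Theorem~3 in \cite{Bre-yor78}): under either measure, $\mathcal{F}_\infty$ and $\mathcal{F}^{\tau_1}_{T_n}$ are conditionally independent given $\mathcal{F}_{T_n}$. Combined with Condition~$(\mathcal{C})$ (so $\mathcal{F}_{T_n}=\mathcal{F}_{T_n^-}$) and the recursive description $\mathcal{F}^{\tau_1}_t=\mathcal{F}_t\vee\sigma(C_1,\dots,C_n)$ on $[T_n,T_{n+1})$, this conditional independence is what lets the induction go through: one factorises $P(A\cap B\cap C_n\mid\mathcal{F}_{T_n})$ for $A\in\mathcal{F}_\infty$, $B\in\sigma(C_1,\dots,C_{n-1})$, replaces each factor by its $Q$-counterpart, and reassembles under $Q$. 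Without invoking this conditional-independence consequence of immersion (under both measures), the identities you list do not by themselves force $Q=P$.
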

\begin{proof}
  If $\tau_1$ is an $\mathbb{F}$-stopping time, then $\mathbb{F}=\mathbb{F}^{\tau_1}$ and $\mathbb{F^\tau}=\mathbb{F}^{\tau_2}$.~Therefore point (i) is trivial (see Remark \ref{rem pred}) and point (ii) is well-known (see Theorem 4.9 in \cite{ditella-eng-22}).\\
 Otherwise, we proceed as follows.\bigskip\\
(i) We sketch the line of the proof of this point.~We show that $P$ is the unique probability measure on $(\Omega, \mathcal{F}^{\tau_1}_\infty)$ that is a martingale measure for the family  $\Big((M^j)_{j\in J},H^{\tau_1, \mathbb{F}^{\tau_1}}\Big)$.~Then Jacod-Yor's Lemma implies that the stable subspace generated by the family $\Big((M^j)_{j\in J},H^{\tau_1, \mathbb{F}^{\tau_1}}\Big)$ coincides with $\mathcal{M}^2(P,\mathbb{F}^{\tau_1}_{\infty})$.~This ends the proof,
since the martingales of the family are pairwise orthogonal (see (\ref{eq01})).\bigskip\\
Let $Q$ be a probability measure on $(\Omega, \mathcal{F}^{\tau_1}_\infty)$ such that all elements of $\Big((M^j)_{j\in J},
H^{\tau_1, \mathbb{F}^{\tau_1}}\Big)$ belong to  $\mathcal{M}^2(Q, \mathbb{F}^{\tau_1})$.~By assumption $\mathcal{F}_0$ is trivial and  $(M^j)_{j\in J}$ is an $\mathbb{F}$-basis so that
Jacod-Yor's Lemma implies
\begin{equation}\label{eq:equiva1}
Q|_{\mathcal{F}_\infty}=P|_{\mathcal{F}_\infty}.
\end{equation}
Observe that $(M^j)_{j\in J}\in\mathcal{M}^2(Q, \mathbb{F}^{\tau_1})$ joint with $\mathbb{F}\hookrightarrow\mathbb{F}^{\tau_1}$ implies
\begin{equation}\label{eq:equiva2}
\mathbb{F}\hookrightarrow_{Q}\mathbb{F}^{\tau_1},
\end{equation}
that is the immersion property of $\mathbb{F}$ in $\mathbb{F}^{\tau_1}$ under $Q$.\bigskip\\
Since the uniqueness of the compensator of $\tau_1$, $H^{\tau_1, \mathbb{F}^{\tau_1}}_\cdot$ is an $\mathbb{F}^{\tau_1}$-martingale under $Q$ if and only if, for all $n\geq 1$,
\begin{equation}\label{eq:equa}
P(C_n\mid\mathcal{F}^{\tau_1}_{T_n^-})=Q(C_n\mid\mathcal{F}^{\tau_1}_{T_n^-})
\end{equation}
(see Lemma 13 in \cite{caltor-accessible-23} and (\ref{def:C_n})).\bigskip\\
The filtration $\mathbb{F}^{\tau_1}$ satisfies
\begin{equation}\label{eq:represen}
\mathcal{F}^{\tau_1}_t=\begin{cases}\mathcal{F}_t,  &  \text{ if } t<T_1\\
   \mathcal{F}_t\vee\sigma (C_1), & \text{ if } t\in [T_1,T_2)\\
   \vdots \\
   \mathcal{F}_t\vee\sigma (C_1,\ldots, C_n), & \text{ if } t\in [T_n,T_{n+1})\\
    \vdots
     \end{cases}
\end{equation}
and
\begin{equation}\label{eq:final}
\mathcal{F}^{\tau_1}_{\infty}=\bigvee_n \mathcal{F}_\infty\vee\sigma (C_1,\ldots, C_n)\end{equation}
(see Lemma 1.5  in \cite{ak-chou-jean-18}).\bigskip\\
The above recursive form of $\mathbb{F}^{\tau_1}$ suggests to apply an iterative procedure
to prove for any $n\geq 1$
\begin{equation}\label{eq:equiva2bis}
Q|_{\mathcal{F}_\infty\vee\sigma (C_1,\ldots, C_n)}=P|_{\mathcal{F}_\infty\vee \sigma (C_1,\ldots, C_n)}.
\end{equation}
The first step of the induction aims to show that
\begin{equation*}
Q|_{\mathcal{F}_\infty\vee\sigma (C_1)}=P|_{\mathcal{F}_\infty\vee \sigma (C_1)},
\end{equation*}
that is for any $A\in \mathcal{F}_\infty$
\begin{equation}\label{eq:step1}
P(A\cap C_1)=Q(A\cap C_1).
\end{equation}
Fixed $A\in \mathcal{F}_\infty$
\begin{equation*}
%
P(A\cap C_1)=E^P\big[P(A\cap C_1\mid \mathcal{F}_{T_1})\big]=E^Q\big[P(A\cap C_1\mid \mathcal{F}_{T_1})\big],
\end{equation*}
where the second equality follows by  (\ref{eq:equiva1}) using $\mathcal{F}_{T_1}\subset \mathcal{F}_{\infty}$.\bigskip\\
The immersion hypothesis $\mathbb{F}\hookrightarrow\mathbb{F}^{\tau_1}$ implies that, under $P$, $\mathcal{F}_{\infty}$ is conditionally independent of $\mathcal{F}^{\tau_1}_{T_1}$ given $\mathcal{F}_{T_1}$ (see Theorem 3 in \cite{Bre-yor78}).~Therefore, since $C_1\in \mathcal{F}^{\tau_1}_{T_1}$ (see (\ref{eq:represen}))
\begin{equation}\label{eq:equiva5}
P(A\cap C_1)=E^Q\big[P(A\cap C_1\mid \mathcal{F}_{T_1})\big]=E^Q\big[P(A\mid \mathcal{F}_{T_1})P(C_1\mid \mathcal{F}_{T_1})\big].
\end{equation}
By (\ref{eq:equiva1}), since $A\in \mathcal{F}_\infty$, it follows
\begin{equation}\label{eq: step11}
P(A\mid \mathcal{F}_{T_1})=Q(A\mid \mathcal{F}_{T_1}).\end{equation}
Condition \textbf{$(\mathcal{C})$} gives $\mathcal{F}_{T_1}=\mathcal{F}_{T_1^-}$ and therefore
\begin{equation}\label{eq: step12}P(C_1\mid \mathcal{F}_{T_1})=P(C_1\mid \mathcal{F}_{T_1^-})=Q(C_1\mid \mathcal{F}_{T_1^-})=Q(C_1\mid \mathcal{F}_{T_1}),\end{equation}
where the middle equality follows by (\ref{eq:equa}) with $n=1$.\bigskip\\
Using (\ref{eq: step11}) and (\ref{eq: step12}) equality (\ref{eq:equiva5}) becomes
\begin{equation}\label{eq:equiva6}
P(A\cap C_1)=E^Q\big[Q(A\mid \mathcal{F}_{T_1})Q(C_1\mid \mathcal{F}_{T_1})\big].
\end{equation}
Now, (\ref{eq:equiva2}) implies that the conditional independence of $\mathcal{F}^{\tau_1}_{T_1}$ given $\mathcal{F}_{T_1}$ also holds under $Q$, so that applying it in the right-hand side of (\ref{eq:equiva6}) one gets (\ref{eq:step1}).
\bigskip\\
The general step of the induction works similarly: fixed $n>1$, one assumes  %
\begin{equation*}
%
Q|_{\mathcal{F}_\infty\vee\sigma (C_1,\ldots, C_{n-1})}=P|_{\mathcal{F}_\infty\vee \sigma (C_1,\ldots, C_{n-1})}
\end{equation*}
and, for any choice of $A\in \mathcal{F}_\infty$ and $B\in \sigma (C_1,\ldots, C_{n-1})$, by the same technique one proves that $P(A\cap B\cap C_n)=Q(A\cap B\cap C_n)$.\\
Due to the arbitrariness of $A$ and $B$, the probability measures $P$ and $Q$ coincide on $\mathcal{F}_\infty\vee\sigma (C_1,\ldots, C_n)$, that is (\ref{eq:equiva2bis}).\bigskip\\
Finally, since (\ref{eq:final}), a monotone class argument yields that  $P$ and $Q$ coincide on $\mathcal{F}^{\tau_1}_{\infty}$.\bigskip\\
(ii) $\mathbb{F}\hookrightarrow\mathbb{F}^\tau$ implies $\mathbb{F}\hookrightarrow\mathbb{F}^{\tau_1}$ and previous point holds.\\
The random time $\tau_2$ satisfies Hypothesis ($\mathcal{A}$) w.r.t.~$\mathbb{F}^{\tau_1}$.~In fact $\tau_2$ by definition avoids both  $\tau_1$, and all $\mathbb{F}$-stopping times.~Moreover, $\mathbb{F}^{\tau_1}\hookrightarrow\mathbb{F}^\tau$ (see Proposition 5.4 in \cite{ak-chou-jean-18}).\\
Since $\big(\mathbb{F}^{\tau_1}\big)^{\tau_2}=\mathbb{F}^{\tau}$ (see Remark \ref{rem:composition}), we can apply  Theorem 4.9 of \cite{ditella-eng-22} to the progressive enlargement of $\mathbb{F}^{\tau_1}$ by $\tau_2$.~It follows that  $$\Big((M^j)_{j\in J}, H^{\tau_1, \mathbb{F}^{\tau_1}}, H^{\tau_2,\mathbb{F}^\tau}\Big)$$ is an $\mathbb{F}^\tau$-basis.\bigskip\\
It is to note that $\mathbb{F}^{\tau_1}\hookrightarrow\mathbb{F}^\tau$ implies
\begin{equation}\label{eq-comp223}
H^{\tau_1,\mathbb{F}^{\tau_1}} = H^{\tau_1,\mathbb{F}^\tau}.\end{equation}
 Thus, we have proved that $\Big((M^j)_{j\in J}, H^{\tau_1, \mathbb{F}^{\tau}}, H^{\tau_2,\mathbb{F}^\tau}\Big)$ is an $\mathbb{F}^\tau$-basis.~Therefore for any $V\in \mathcal{M}^2(P,\mathbb{F}^\tau)$
\begin{align}\label{eq:basis2}
V_t=V_0+\sum_{i}\int_0^t \Phi^V_j(s) d M^j_s+\int_0^t \gamma(s) d H^{\tau_1,\mathbb{F}^\tau}_s+\int_0^t \eta(s) d H^{\tau_2,\mathbb{F}^\tau}_s,\;\;t\geq 0.
\end{align}
Here, $V_0$ is a random variable $\mathcal{F}_0$-measurable,
$\Phi^V_j$, for all $j\in J$, $\gamma$ and $\eta$ are $\mathbb{F}^\tau$-predictable processes such that the random variables $$\int_0^\infty\,(\Phi^V_j)^2(s) d[M^j]_s,\;\;\int_0^\infty\,\eta^2(s) d[ H^{\tau_1,\mathbb{F}^\tau}]_s,\;\;\int_0^\infty\,\gamma^2(s) d[H^{\tau_2,\mathbb{F}^\tau}]_s$$ have finite expectations (see Definition \ref{def:basis}).\bigskip\\
Let $\mathcal{D}$ be the  $\mathbb{F}^\tau$-predictable subset of $\Omega \times [0,T]$ defined as
$$\mathcal{D}:=\bigcup_n\,[[T_n]].$$
Now,
 \begin{equation}\label{eq:rangle}\int_{\overline{\mathcal{D}}}d\langle H^{\tau_1,\mathbb{F}^\tau}\rangle_t =0\;\;\;\;\;\; \int_{\mathcal{D}}d\langle H^{\tau_2,\mathbb{F}^\tau}\rangle_t =0.\end{equation}
The first equality above follows by Proposition 2.15 in \cite{caltor-accessible-23}, (\ref{eq-comp223}) and (\ref{eq-comp222}).~The second one, instead, follows observing
that  $\langle H^{\tau_2,\mathbb{F}^\tau}\rangle$ is a continuous process, since $\tau_2$ is $\mathbb{F}^\tau$-totally inaccessible (see Proposition \ref{prop:1} (ii) and Theorem 7.11, p.195, in \cite{he-wang-yan92}).\bigskip\\
 Let $\rho$ be the predictable process defined by
 $$\rho_t:=\gamma_t \mathbb{I}_{\mathcal{D}}(t)+\eta_t \mathbb{I}_{\overline{\mathcal{D}}}(t).$$
 It holds $E\left[\int_0^\infty\,\rho^2_s d[ H^{\tau,\mathbb{F}^\tau}]_s\right]<+\infty$.\bigskip\\
  By (\ref{eq:rangle}), $\mathcal{D}$ is a predictable support of $d\langle H^{\tau_1,\mathbb{F}^\tau}\rangle$ and $\overline{\mathcal{D}}$ is a predictable support of $d\langle H^{\tau_2,\mathbb{F}^\tau}\rangle$.~Following the same lines as in the proof of Proposition 4.2 (i) in \cite{caltor-accessible-23}, we derive
  \begin{equation*}\int_0^t \gamma_s d H^{\tau_1,\mathbb{F}^\tau}_s=\int_0^t \rho_s d H^{\tau_1,\mathbb{F}^\tau}_s,\;\;\;\;\;   \int_0^t \eta_s d H^{\tau_2,\mathbb{F}^\tau}_s=\int_0^t \rho_s d H^{\tau_2,\mathbb{F}^\tau}_s,\;t\geq 0.\end{equation*}
  Therefore,
since
 \begin{equation}\label{eq:sumH}
 H^{\tau,\mathbb{F}^\tau}= H^{\tau_1,\mathbb{F}^\tau} + H^{\tau_2,\mathbb{F}^\tau},
 \end{equation}
equality (\ref{eq:basis2}) can be rewritten as
  \begin{align*}
V_t=V_0+\sum_{i}\int_0^t \Phi^V_j(s) d M^j_s+\int_0^t \rho(s) d H^{\tau,\mathbb{F}^\tau}_s,\;\;t\geq 0.
\end{align*}
 By the arbitrariness of $V$ the last equality gives the thesis.
\end{proof}
\vspace{0.1em}
\begin{remark}\label{rem:generalized}
Let us recall that whatever $\tau$ is, it holds
$H^{\tau_2,\mathbb{F}^\tau}=\mathbb{I}_{\tau_2\leq\cdot}-\Lambda^{\tau_2\wedge\cdot},$ where $\Lambda$ denotes the \textit{$\mathbb{F}^{\tau_1}$-reduction of the compensator of $\tau_2$} (see e.g.~Proposition 2.11, p.36, in \cite{aksamit-jean-17}).~Thus, in the setting of point (ii) of the above theorem,  using (\ref{eq:sumH}), (\ref{eq-comp223}) and (\ref{eq-comp222}) one gets
\begin{equation}\label{eq:compen}
H^{\tau,\mathbb{F}^\tau}=\mathbb{I}_{\tau\leq\cdot}-\sum_{n}P(C_n\mid\mathcal{F}^{\tau_1}_{T_n^-})\mathbb{
I}_{\{T_n\le
  \cdot\}}-\Lambda^{\tau_2\wedge\cdot}.
\end{equation}
In particular, when $\tau_2$ satisfies \textit{Jacod's density hypothesis} w.r.t.~$\mathbb{F}^{\tau_1}$ with \textit{intensity $\lambda$} (see Condition (A) and Proposition 1.5 in \cite{jacod85}),
\begin{equation}\label{eq:generalized}
H^{\tau,\mathbb{F}^\tau}=\mathbb{I}_{\tau\leq\cdot}-\sum_{n}P(C_n\mid\mathcal{F}^{\tau_1}_{T_n^-})\mathbb{
I}_{\{T_n\le
  \cdot\}}-\int^{\tau_2\wedge\cdot}_0\lambda_s\;ds.
\end{equation}
Therefore, when the exhausting sequence of $\tau_1$ is finite, $\tau$ is under the \textit{generalized density hypothesis} (see  \cite{jiao-li15}).
\end{remark}
\section{Examples}\label{sec4}
In the literature, many models dealing with progressive enlargement take as reference filtration the natural filtration of a L\'evy process.~For the sake of clarity, we restate Theorem \ref{thm:new} in this particular case.~It is worth noting that the natural filtration of any L\'evy process always admits a basis (see Theorem 4.3 in  \cite{ditella-eng-22}).~Let us recall that the process $H^{\tau,\mathbb{F}^\tau}$ under the hypotheses of point (ii) of the theorem has the form (\ref{eq:compen}).\vspace{.1em}\\
\begin{thm}\label{cor:Levy}
 Let $\mathbb{F}$ be the natural filtration of a L\'evy process and $(M^j)_{j\in J}, J\subset\mathbb{N},$ any $\mathbb{F}$-basis.\\
 Let $\tau$ be any random time that satisfies Hypothesis $(\bold{\mathcal{P}})$ and let $H^{\tau,\mathbb{F}^\tau}$ be the $\mathbb{F}^\tau$-compensated occurrence process of $\tau$.\\
 If $\mathbb{F}\hookrightarrow\mathbb{F}^\tau$, then $\Big((M^j)_{j\in J}, H^{\tau,\mathbb{F}^\tau}\Big)$ is an $\mathbb{F}^\tau$-basis.
\end{thm}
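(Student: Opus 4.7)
The plan is to obtain this result as a direct specialization of Theorem \ref{thm:new}(ii) to the L\'evy setting, so the real work reduces to checking that every hypothesis of that theorem is automatically satisfied here.

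First I would verify that the setting $(\bold{\mathcal{S}})$ holds: the natural filtration of a L\'evy process has trivial initial $\sigma$-algebra, the existence of an $\mathbb{F}$-basis $(M^j)_{j\in J}$ is assumed in the statement (and is in any case guaranteed by Theorem 4.3 in \cite{ditella-eng-22}), and Hypothesis $(\bold{\mathcal{P}})$ on $\tau$ is part of the assumptions, so the $\mathbb{F}$-thin part $\tau_1$ comes equipped with an exhausting sequence $(T_n)_n$ of $\mathbb{F}$-predictable stopping times. The immersion $\mathbb{F}\hookrightarrow\mathbb{F}^\tau$ is assumed directly.

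The central step, and the only point where the L\'evy structure really enters, is the verification of Condition $(\bold{\mathcal{C}})$, namely $\mathcal{F}_{T_n}=\mathcal{F}_{T_n^-}$ for every $n\geq 1$. I would invoke here the classical fact that the natural filtration of any L\'evy process is quasi-left continuous: the jump times of a L\'evy process are totally inaccessible $\mathbb{F}$-stopping times, hence no $\mathbb{F}$-martingale can jump at a predictable stopping time. In view of the remark immediately following Condition $(\bold{\mathcal{C}})$ in the paper, this is precisely what is needed: applied to the $\mathbb{F}$-predictable times $T_n$ produced by Hypothesis $(\bold{\mathcal{P}})$, it yields $\Delta M^j_{T_n}\equiv 0$ for every $j\in J$ and every $n$, and therefore $\mathcal{F}_{T_n}=\mathcal{F}_{T_n^-}$.

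With $(\bold{\mathcal{S}})$, $(\bold{\mathcal{C}})$ and $\mathbb{F}\hookrightarrow\mathbb{F}^\tau$ all in place, Theorem \ref{thm:new}(ii) applies and delivers the announced $\mathbb{F}^\tau$-basis; the explicit shape of $H^{\tau,\mathbb{F}^\tau}$ is the one recorded in (\ref{eq:compen}). The expected main obstacle is therefore not mathematical but expository: one really only has to single out quasi-left continuity of the L\'evy filtration as the property responsible for Condition $(\bold{\mathcal{C}})$, and then let the machinery of Theorem \ref{thm:new} take over. A minor care might be required in justifying quasi-left continuity at the level of detail the paper expects, but this is a standard consequence of the L\'evy--It\^o decomposition.
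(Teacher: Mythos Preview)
Your proposal is correct and follows exactly the paper's own route: the paper's proof consists of a single sentence observing that the natural filtration of a L\'evy process is quasi-left continuous, so Condition $(\bold{\mathcal{C}})$ holds and Theorem \ref{thm:new} applies. Your write-up is more detailed but the substance is the same.
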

\begin{proof}
$\mathbb{F}$ is quasi-left continuous  and therefore Condition \textbf{$(\mathcal{C})$} holds (see Proposition 7, p.21, in \cite{MR1406564}).~Thus, Theorem \ref{thm:new} applies.
\end{proof}
\vspace{.1em}
\noindent As an immediate consequence of the previous theorem and Proposition \ref{prop:0}, we get the next result.~It deals with the natural generalization of the martingales representation property of the Brownian filtration.\vspace{.1em}\\
\begin{cor}\label{cor:Brow}
 Let $\mathbb{F}$ be the natural filtration of a Brownian motion $W$ and $\tau$ any random time.~If  $\mathbb{F}\hookrightarrow\mathbb{F}^\tau$ holds, then $\Big(W,H^{\tau,\mathbb{F}^\tau}\Big)$ is an $\mathbb{F}^\tau$-basis.
\end{cor}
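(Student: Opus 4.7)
The plan is to deduce the corollary as a direct specialization of Theorem \ref{cor:Levy} to the Brownian case. Since Brownian motion is a L\'evy process, its natural filtration $\mathbb{F}$ falls within the scope of that theorem, so I only need to verify the three ingredients required by its statement: that $W$ alone constitutes an $\mathbb{F}$-basis, that $\tau$ satisfies Hypothesis $(\bold{\mathcal{P}})$ w.r.t.~$\mathbb{F}$, and that $\mathbb{F}\hookrightarrow\mathbb{F}^\tau$.

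The third point is assumed. The first is the classical Brownian martingale representation theorem: every element of $\mathcal{M}^2(P,\mathbb{F})$ admits a stochastic integral representation with respect to $W$, so the singleton $\{W\}$ qualifies as an $\mathbb{F}$-basis in the sense of Definition \ref{def:basis} (pairwise orthogonality being vacuous). The second point is precisely the content of Proposition \ref{prop:0}: because the Brownian filtration is continuous, every $\mathbb{F}$-stopping time is $\mathbb{F}$-predictable, so any $\mathbb{F}$-exhausting sequence $(T_n)_n$ of the $\mathbb{F}$-thin part $\tau_1$ of $\tau$ is automatically made of $\mathbb{F}$-predictable stopping times, which is what Hypothesis $(\bold{\mathcal{P}})$ requires.

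With the three hypotheses verified, Theorem \ref{cor:Levy} directly gives that $\bigl(W,H^{\tau,\mathbb{F}^\tau}\bigr)$ is an $\mathbb{F}^\tau$-basis, which is the assertion. There is no real obstacle: the corollary is essentially a bookkeeping exercise combining Proposition \ref{prop:0} with Theorem \ref{cor:Levy}. The only conceptual remark worth making is that when $\tau$ already satisfies the $\mathbb{F}$-avoidance condition one has $\tau_1\equiv\infty$ and the statement reduces to known results such as Theorem 4.9 in \cite{ditella-eng-22}; the new content lies in the case where $\tau$ may coincide with $\mathbb{F}$-predictable stopping times, in which instance $H^{\tau,\mathbb{F}^\tau}$ has the explicit form~(\ref{eq:compen}).
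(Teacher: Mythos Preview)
Your proposal is correct and mirrors the paper's own argument exactly: the paper states Corollary~\ref{cor:Brow} as an immediate consequence of Theorem~\ref{cor:Levy} combined with Proposition~\ref{prop:0}, which is precisely what you do. Your additional remarks on the $\mathbb{F}$-avoiding case and on formula~(\ref{eq:compen}) are accurate and consistent with the paper's discussion.
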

\vspace{.1em}
\begin{remark}
\noindent The \textit{hybrid sovereign default model} proposed  by \cite{MR3758923} falls among the possible examples of application of the above corollary.~In that model the default is defined as
$$\tau:=\zeta^*\wedge\xi.$$
The random time $\zeta^*$ describes \textit{successive solving downgrades} due to economic and political inferences, while $\xi$ stays for the \textit{idiosyncratic risks}.~From a mathematical point of view, $\zeta^*$ is an accessible random time with a finite number of predictable components and  $\xi$  is a totally inaccessible random time.~The process driving the \textit{solvency process} is a Brownian motion,  $\tau_1$ is equal to $\zeta^*$ and $\tau_2$ is equal to $\xi$.~The authors show that the immersion property holds.~Therefore, Corollary \ref{cor:Brow} applies.
\end{remark}
\vspace{0.5em}
%
%
%
%
\noindent The generalization of the classical Cox procedure for default times  provides a class of applications of  Theorem \ref{cor:Levy} (see \cite{MR4529908}).\vspace{.1em}\\
\begin{cor}
Let $\mathbb{F}$ be the natural filtration of a L\'evy process and $(M^j)_{j\in J}, J\subset\mathbb{N},$ any $\mathbb{F}$-basis.~Let $\tau$ be defined by
$$\tau:=\inf\{t\geq 0, K_t\geq \Theta\},$$
where $K$ is a c\`adl\`ag $\mathbb{F}$-predictable process such that $K_0=0$, and $\Theta$ is a random variable  with exponential law of parameter one, independent of  $\mathcal{F}_\infty$.\\Then $\Big((M^j)_{j\in J}, H^{\tau,\mathbb{F}^\tau}\Big)$ is an  $\mathbb{F}^\tau$-basis.
\end{cor}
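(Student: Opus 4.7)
The plan is to verify the two hypotheses of Theorem~\ref{cor:Levy}: that $\tau$ satisfies Hypothesis $(\mathcal{P})$ with respect to $\mathbb{F}$, and that the immersion property $\mathbb{F}\hookrightarrow\mathbb{F}^\tau$ holds.

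First, I would analyse the $\mathbb{F}$-thin-thick decomposition of $\tau$. Because $K$ is c\`adl\`ag and $\mathbb{F}$-predictable, the random set $\{\Delta K\neq 0\}$ is an $\mathbb{F}$-predictable thin set, hence exhausted by a sequence $(T_n)_n$ of $\mathbb{F}$-predictable stopping times with pairwise disjoint graphs (see Theorem~3.32 in \cite{he-wang-yan92}). I would then show that at any $\mathbb{F}$-stopping time $S$ whose graph does not meet $\bigcup_n[[T_n]]$, the event $\{\tau=S<\infty\}$ forces $\Theta$ to coincide with an $\mathcal{F}_S$-measurable functional of the path of $K$; the independence of $\Theta$ from $\mathcal{F}_\infty$ together with the diffuseness of its exponential law then yields $P(\tau=S<\infty)=0$. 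This identifies the thin part $\tau_1$ of $\tau$ with exhausting sequence $(T_n)_n$ of $\mathbb{F}$-predictable stopping times, and shows that the thick part $\tau_2$ avoids all $\mathbb{F}$-stopping times. If $\tau_1$ happens to be a.s.\ infinite (as when $K$ is continuous), the statement reduces to the classical avoidance case and follows from Theorem~4.9 in \cite{ditella-eng-22}; otherwise Hypothesis $(\mathcal{P})$ is in force.

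Next I would check the immersion $\mathbb{F}\hookrightarrow\mathbb{F}^\tau$ via the classical Br\'emaud-Yor criterion, by which it suffices that $P(\tau\leq t\mid\mathcal{F}_\infty)=P(\tau\leq t\mid\mathcal{F}_t)$ for every $t\geq 0$. Using the independence of $\Theta$ from $\mathcal{F}_\infty$ and its exponential law,
\begin{equation*}
P(\tau>t\mid\mathcal{F}_\infty)=P\bigl(\Theta>\textstyle\sup_{s\leq t}K_s\,\bigm|\,\mathcal{F}_\infty\bigr)=\exp\!\bigl(-(\textstyle\sup_{s\leq t}K_s)^+\bigr),
\end{equation*}
and the right-hand side is $\mathcal{F}_t$-measurable, because $K$ is c\`adl\`ag and $\mathbb{F}$-adapted. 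Hence the immersion holds and Theorem~\ref{cor:Levy} delivers the desired conclusion.

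The hardest point is the clean identification of the thin-thick decomposition, and specifically the proof that outside $\bigcup_n[[T_n]]$ the random time $\tau$ avoids every $\mathbb{F}$-stopping time; once this measure-theoretic check is in place, the remaining ingredients are the standard facts about Cox constructions.
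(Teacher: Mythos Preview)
Your proposal is correct and follows exactly the route the paper takes: verify Hypothesis~$(\mathcal{P})$ and the immersion property, then invoke Theorem~\ref{cor:Levy}. The paper's own proof is a two-line appeal to \cite{MR4529908} for both checks, whereas you actually sketch the arguments; in particular your observation that $\{\tau=S<\infty,\ \Delta K_S=0\}$ forces $\Theta=K_S\in\mathcal{F}_\infty$, hence has probability zero by diffuseness and independence, is precisely the content behind the cited reference, and your separate treatment of the case where $\tau_1$ is trivial (continuous $K$) is a point of rigor the paper leaves implicit.
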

\begin{proof}
  Theorem \ref{cor:Levy} applies.~In fact: the immersion property of $\mathbb{F}$ in $\mathbb{F}^\tau$ is obtained for free by Cox construction (see e.g.~p.471 in \cite{MR4529908}); the  Hypothesis $(\bold{\mathcal{P}})$ for $\tau$ derives from the assumptions on $K$ (see Section 3.2.5, p.477, in \cite{MR4529908}).
\end{proof}
\vspace{.1em}
\begin{remark}
 If the continuous part of the process $K$ in the above corollary is  equal to $\int_0^\cdot \lambda_s ds$, with $\lambda$ an $\mathbb{F}$-adapted  positive process, then $H^{\tau,\mathbb{F}^\tau}$ has the expression (\ref{eq:generalized}).
\end{remark}
\vspace{.1em}
\noindent In the following model, dealing with a special class of Brownian filtrations, the random time is not constructed by Cox procedure.~This example has been proposed in \cite{MR4152276} studying the market's completeness under filtration shrinkage and in \cite{ak-chou-jean-18} as an example of thin time.\bigskip\\
Let $B$ be the \textit{L\'evy transformation of a Brownian motion} $W$, that is the process $$B:=\int_0^\cdot sign(W_s)\;dW_s$$ and let $\mathbb{F}^B$ and $\mathbb{F}^W$ be the natural filtrations of $B$ and $W$, respectively.~As well-known  $B$ is a natural Brownian motion,  and $\mathbb{F}^B=\mathbb{F}^{|W|}\subsetneq\mathbb{F}^W$, and the exponential martingale  $$S:=\mathcal{E}(B)$$ is both an $\mathbb{F}^B$-basis and an $\mathbb{F}^W$-basis.~In the language of finance modeling, $(S,\mathbb{F}^B)$ is a complete market, as well as $(S,\mathbb{F}^W)$.~But, when
\begin{equation}\label{def:tau ex}\tau:=\inf\{t\geq 0: W_t=1\},\end{equation}
$S$ is not an $(\mathbb{F}^B)^\tau$-basis.\\
In fact, if $(T_n)_{n\geq 1}$ is the sequence of $\mathbb{F}^B$-stopping times defined by
$$T_n:=\inf\{t> S_{n-1}: |W_t|=1\}$$
with
$$S_n:=\inf\{t> T_{n-1}: |W_t|=0\},\;n>1,\;\;\;\text{and}\;\;\;S_0:=0,$$
then the process
\begin{equation*}
N:=\mathbb{
I}_{\{\tau\le
  \cdot\}}-\frac12\sum_{n\geq 1}\mathbb{I}_{\tau\geq T_n}\mathbb{
I}_{\{T_n\le
  \cdot\}},
\end{equation*}
is a discontinuous $(\mathbb{F}^B)^\tau$-martingale and cannot be represented by integration w.r.t.~$S$.\bigskip\\
In \cite{MR4152276}, since $\mathbb{F}^B\subset \big(\mathbb{F}^B\big)^\tau\subset\mathbb{F}^W$, this example is useful to explain why some markets could lose completeness after the reduction of information or, vice versa, when the addition of information is too little.\\
In \cite{ak-chou-jean-18}, by the previous example, the authors discuss a case of a natural totally inaccessible random time, $\tau$, which coincides with its thin part w.r.t.~a given filtration, $\mathbb{F}^B$, with an exhausting sequence of predictable stopping times, $(T_n)_{n\geq 1}$.~We stress that $\tau$ is $(\mathbb{F}^B)^\tau$-accessible and becomes  predictable in the even larger filtration $\mathbb{F}^W$.\bigskip\\
Here we answer the question:  how can be represented all $(\mathbb{F}^B)^\tau$-local martingales since $S$ is not enough?\vspace{.1em}\\
\begin{proposition} $(S,N)$ is an $(\mathbb{F}^B)^\tau$-basis.\end{proposition}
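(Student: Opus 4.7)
The plan is to deduce the statement from Corollary \ref{cor:Brow}, applied with the reference Brownian motion $B$ and the random time $\tau$. Three ingredients must be checked: (a) the immersion $\mathbb{F}^B\hookrightarrow(\mathbb{F}^B)^\tau$; (b) the identification $N=H^{\tau,(\mathbb{F}^B)^\tau}$; (c) the substitution of $B$ by $S=\mathcal{E}(B)$ as the continuous element of the basis.

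For (a), I would argue as follows. The process $B=\int_0^\cdot\operatorname{sign}(W_s)\,dW_s$ is a continuous $\mathbb{F}^W$-local martingale with $\langle B\rangle_t=t$, hence by L\'evy's characterization it is an $\mathbb{F}^W$-Brownian motion. Because $\tau$ is an $\mathbb{F}^W$-stopping time one has $\mathbb{F}^B\subset(\mathbb{F}^B)^\tau\subset\mathbb{F}^W$, and the Brownian motion property of $B$ passes to the intermediate filtration $(\mathbb{F}^B)^\tau$. By the predictable representation property of $B$ in $\mathbb{F}^B$, every $\mathbb{F}^B$-local martingale has the form $V_0+\int_0^\cdot\phi_s\,dB_s$ with $\phi$ an $\mathbb{F}^B$-predictable integrand; such an integral is automatically an $(\mathbb{F}^B)^\tau$-local martingale, which yields immersion.

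For (b), since $\tau$ coincides with its $\mathbb{F}^B$-thin part, the $\mathbb{F}^B$-thick component $\tau_2$ is trivial ($\tau_2\equiv\infty$) and $H^{\tau_2,(\mathbb{F}^B)^\tau}\equiv 0$; by (\ref{eq:sumH}), (\ref{eq-comp223}) and (\ref{eq-comp222}),
\[
H^{\tau,(\mathbb{F}^B)^\tau}=H^{\tau_1,(\mathbb{F}^B)^{\tau_1}}=\mathbb{I}_{\{\tau\leq\cdot\}}-\sum_{n}P(C_n\mid\mathcal{F}^{\tau_1}_{T_n^-})\,\mathbb{I}_{\{T_n\leq\cdot\}}.
\]
Matching this expression with the given $N$ reduces to verifying $P(C_n\mid\mathcal{F}^{\tau_1}_{T_n^-})=\tfrac12\mathbb{I}_{\{\tau\geq T_n\}}$. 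On $\{\tau<T_n\}$ both sides vanish. On $\{\tau\geq T_n\}$ all earlier excursion-hits landed at $-1$, and $C_n$ coincides with $\{W_{T_n}=+1\}$; the $\sigma$-algebra $\mathcal{F}^{\tau_1}_{T_n^-}$ is generated by the path of $|W|$ up to $T_n$ together with the information that the past excursions failed to reach $+1$, neither of which resolves the sign of $W_{T_n}$. The strong Markov property of $W$ at $S_{n-1}$ combined with the symmetry of Brownian motion then delivers the conditional probability $1/2$.

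For (c), from $dS_t=S_t\,dB_t$ with $S_t>0$ the substitution $\phi\mapsto\phi/S$ shows that $B$ and $S$ generate the same stable subspace inside $\mathcal{M}^2(P,(\mathbb{F}^B)^\tau)$; moreover $[S,N]=\int S_-\,d[B,N]\equiv 0$ by the orthogonality $[B,N]\equiv 0$ provided by Corollary \ref{cor:Brow}. Hence $(S,N)$ inherits from $(B,N)$ the status of an $(\mathbb{F}^B)^\tau$-basis, which is the thesis. I expect the conditional-probability computation in step (b) to be the main obstacle, as it is the only place where the specific excursion structure of the underlying Brownian motion $W$ must replace abstract enlargement-theory machinery.
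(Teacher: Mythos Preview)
Your proposal is correct and follows essentially the same route as the paper: verify immersion $\mathbb{F}^B\hookrightarrow(\mathbb{F}^B)^\tau$ via $\mathbb{F}^B\subset(\mathbb{F}^B)^\tau\subset\mathbb{F}^W$, identify $N$ with $H^{\tau,(\mathbb{F}^B)^\tau}$ through the conditional-probability computation $P(C_n\mid(\mathcal{F}^B)^\tau_{T_n^-})=\tfrac12\,\mathbb{I}_{\{\tau\ge T_n\}}$ based on Brownian symmetry, and invoke the paper's main representation theorem. The only cosmetic difference is that the paper applies Theorem~\ref{thm:new} directly with $S$ as the $\mathbb{F}^B$-basis, thereby skipping your step~(c); the paper's symmetry argument is phrased via the transformation $W\mapsto -W$ (which fixes $\mathcal{F}^{|W|}_{T_n}$ and the $\sigma$-algebra $\sigma(C_1,\dots,C_{n-1})$ while swapping $\{W_{T_n}=1\}$ with its complement), whereas you invoke strong Markov at $S_{n-1}$---both lead to the same conclusion.
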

\begin{proof}
We apply Theorem \ref{thm:new} to $\mathbb{F}^B$, $S$ as $\mathbb{F}^B$-basis and $\tau$ given by (\ref{def:tau ex}), and, more precisely, since $\tau$ is an $\mathbb{F}^B$-thin time, we apply point (i) of Theorem \ref{thm:new}.\bigskip\\
$\mathbb{F}^B\hookrightarrow (\mathbb{F}^B)^\tau$, since $\mathbb{F}^B\hookrightarrow\mathbb{F}^W$.\\
$\mathbb{F}^B$, like any Brownian natural filtration, satisfies Condition \textbf{$(\mathcal{C})$}.\\
It remains to check that $H^{\tau, (\mathbb{F}^B)^{\tau}}$, for which the analogous of (\ref{eq-comp222}) holds, coincides with $N$.\bigskip\\
Let us observe that
$$P(C_n\mid (\mathcal{F}^B)^\tau_{T_n^-})=P(C_n\cap\{\tau\geq T_n\}\mid (\mathcal{F}^B)^\tau_{T_n^-})=P(C_n\mid (\mathcal{F}^B)^\tau_{T_n^-})\mathbb{I}_{\tau\geq T_n},$$
where last equality follows by $$\{\tau< T_n\} \in (\mathcal{F}^B)^\tau_{T_n^-}.$$
The analogous of (\ref{eq-comp222}) reads
\begin{equation*}
H^{\tau, (\mathbb{F}^B)^\tau}_\cdot=\mathbb{
I}_{\{\tau\le
  \cdot\}}-\sum_{n\geq 1}P\left(C_n\mid(\mathcal{F}^B)^\tau_{T_n^-}\right)\mathbb{I}_{\tau\geq T_n}\mathbb{
I}_{\{T_n\le
  \cdot\}}.
\end{equation*}
In fact, for any fixed $n\geq 1$,
\begin{equation*}
P\big(C_n\mid(\mathcal{F}^B)^\tau_{T_n^-})\mathbb{I}_{\tau\geq T_n}=P(W_{T_n}=1\mid\mathcal{F}^{|W|}_{T_n}\vee\sigma (C_1,\ldots, C_{n-1})\big)\mathbb{I}_{\tau\geq T_n},\end{equation*}
where we have used (\ref{eq:represen}) and $$\mathcal{F}^{|W|}_{T_n^-}=\mathcal{F}^{B}_{T_n^-}=\mathcal{F}^{B}_{T_n}=\mathcal{F}^{|W|}_{T_n}.$$
Moreover,
$$P\big(W_{T_n}=1\mid\mathcal{F}^{|W|}_{T_n}\vee\sigma (C_1,\ldots, C_{n-1})\big)\mathbb{I}_{\tau\geq T_n}=\frac12\;\mathbb{I}_{\tau\geq T_n}.$$
To prove last equality, we observe that
$$P\big((W_{T_n}=1)^c\mid\mathcal{F}^{|W|}_{T_n}\vee\sigma (C_1,\ldots, C_{n-1})\big)=P\big(-W_{T_n}=1\mid\mathcal{F}^{|-W|}_{T_n}\vee\sigma (C_1,\ldots, C_{n-1})\big)$$
and, setting $D_h:=\{W_{T_h}=1\}=\{-W_{T_h}=1\}^c,\;h=1,\ldots,n-1,$ that
$$\sigma (C_1,\ldots, C_{n-1})=\sigma (D_1,\ldots, D_{n-1}).$$
Finally, by the symmetry of Brownian motion,
$$P\big(W_{T_n}=1\mid\mathcal{F}^{|W|}_{T_n}\vee\sigma (C_1,\ldots, C_{n-1})\big)=P\big((W_{T_n}=1)^c\mid\mathcal{F}^{|W|}_{T_n}\vee\sigma (C_1,\ldots, C_{n-1})\big).$$
\end{proof}
\section{Comments and perspectives}
The proof of our main theorem relies on two facts: the recursive formula of the filtration obtained progressively enlarging the reference filtration by a thin time (see (\ref{eq:represen})) and the expression of the compensated occurrence process of any accessible stopping time (see (\ref{eq-comp222})).\\

The object of ongoing research is, using the same ideas, to derive martingale representations relaxing the assumption on the thin part of $\tau$ and in the framework of multiple defaults.

\bibliography{biblio}

\end{document}